\author{Benjamin Engel}
\email[Benjamin Engel]{engel.benjamin@gmail.com}
\title{Log-Concavity of the Overpartition Function}
\newcommand{\N}{\mathbb{N}}
\newcommand{\R}{\mathbb{R}}
\newcommand{\Z}{\mathbb{Z}}
\renewcommand{\ref}[1]{(\ref{#1})}
\newcommand{\op}{\overline{p}(n)}
\newcommand{\hm}{\hat{\mu}}
\newcommand{\EopA}{\sum_{0\le h < k \atop (h,k)=1} \limits \f{\omega(h,k)^2}{\omega(2h,k)}e\left(-\frac{nh}{k}\right)}
\newcommand{\Eop}{\f{1}{2 \pi} \sum_{k \ge 1 \atop 2 \nmid k} \sqrt{k} \EopA \f{d}{dn}\left( \f{\sinh(\pi \sqrt{n} /k)}{\sqrt{n}}\right) }
\newcommand{\ask}{A^*_k(n)}
\newcommand{\f}[2]{\frac{#1}{#2}}
\newcommand{\ohk}{\eexp{\pi i \sum\limits_{r=1}^{k-1} \frac{r}{k}\left(\frac{hr}{k}-\left\lfloor \frac{hr}{k}\right\rfloor - \frac{1}{2} \right)}}
\newcommand{\Eask}{\sqrt{k}\sum\limits_{(h,k)=1 \atop h\in \left( \Z / k\Z \right) ^\times} \omega(h,k)e\left( -\frac{hn}{k}\right) }
\newcommand{\eexp}[1]{\exp\left( #1 \right)}
\crefname{thm}{theorem}{theorems}
\crefname{equation}{eq.}{eqs.}
\crefname{lemma}{lemma}{lemmas}
\crefname{figure}{figure}{figures}
\newcommand{\re}[1]{(\ref{#1})}
\theoremstyle{plain}
\newtheorem{thm}{Theorem}[section]
\newtheorem{lemma}[thm]{Lemma}
\theoremstyle{definition}
\newtheorem*{nonthm}{Theorem}
\newtheorem*{defi}{Definition}
\theoremstyle{remark}
\newcommand{\hl}[1]{\textit{\textbf{#1}}}
 \thanks{The author is grateful for useful advice and guidance from Professor Bringmann, Dr. Krauel, Dr. Li, Dr. Mertens and  Dr. Rolen.} 
\begin{document}

\begin{abstract}
	 We prove that the overpartition function $  \op $ is log-concave for all $ n\ge 2 $. The proof is based on Sills Rademacher type series for $ \op $ and inspired by DeSalvo and Pak's proof for the partition function.
\end{abstract}

\maketitle
\noindent\textbf{Keywords.} Integer Partition, Overpartition Function, Log-Concave Sequence, Circle Method\\
\noindent \textbf{MSC Keywords.} 05A17, 11P82, 11F20, 11F37

\section{Introduction and Statement of Results}
\noindent A \textbf{partition} of a positive integer $n$ is a non-increasing sequence of positive integers $a_1,\dots,a_l$  whose sum is equal to $n$. The number of partitions of a positive integer is commonly denoted by $p(n)$.
For example, the partitions of 4 are $$\lbrace 1,1,1,1\rbrace, \lbrace 1,1,2\rbrace, \lbrace 2,2\rbrace , \lbrace 1,3 \rbrace, \ \lbrace 4 \rbrace.$$
 Determing this number might seem like a very simple objective, a priori. However, in order to prove strong results about $p(n)$, one needs significant amounts of theory
 such as complex analysis, the theory of modular forms, and knowledge about Kloosterman sums and Bessel functions. For example,  with the circle method, one can obtain series representations or investigate the asymptotic behavior of the partition function, as well as prove other results.

Much of the modern study of properties of $p(n)$ begins with Ramanujan and Hardy, who found among many other results the asymptotic behavior of $p(n)$:
\begin{equation}\label{p:asymp}
	p(n) \sim \frac{\eexp{\pi \sqrt{\frac{2n}{3}}}}{4n\sqrt{3}}.
\end{equation} 
In 1917, Littlewood, Ramanujan and Hardy invented in 
\cite{hardy1918asymptotic} the circle method to obtain  \re{p:asymp}.
\\

Based on their results, Rademacher extended \re{p:asymp} to an exact formula in 1937 \cite{rademacher1938partition}. In the same year, D. H. Lehmer published  a formula with an error term \cite{l1}:
\begin{equation}\label{p:lehmer-rademacher}
	p(n)= \frac{\sqrt{12}}{24n-1}\sum\limits_{k=1}^{N}\ask
	\left[\left(1-\frac{k}{\mu}\right)
	e^{\mu/k} +
	\left(1+\frac{k}{\mu} \right) e^{-\mu /k} \right] +\dot{R}_2(n,N) .
\end{equation}
Here 
$$\mu := \mu(n)=\frac{\pi}{6}\sqrt{24n-1}$$ 
and 
\begin{equation*}\label{p:Ak}
	A_k(n):=\sqrt{k}\ask =\Eask,
\end{equation*}
with
$$e(x):=\eexp{2\pi i x}.$$
The term $\dot{R}_2/(n,N)$ is Lehmers reminder Term.
Finally, 
\begin{equation*}\label{ohk}
	\omega(h,k):=\ohk
\end{equation*}
for positive integers $h$ and $k$.

 We come now to the main result of this paper. Therefore, we recall what it means for  a function to be log-concave.
For a real valued function $f$ and positive integer $n$, define $\mathcal{C}(f(n))$ by 
\begin{equation*}\label{def:C}
	\mathcal{C}(f(n)):=\log(f(n+1))-2\log(f(n))+\log(f(n-1)).
\end{equation*}
\begin{defi}
	A function $f$ is \hl{log-concave} for a positive integer $n$ if $\mathcal{C}(f(n))\le 0 $.
\end{defi}
In 2013, Desalvo and Pak utilize \re{p:lehmer-rademacher} in \cite{main} to develop the following result about the log-concavity of $p(n)$:
\begin{nonthm}[Desalvo and Pak]\label{DP:thm}
	The sequence $p(n)$ is log-concave for all $n >25$.
\end{nonthm}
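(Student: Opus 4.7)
The plan is to mirror the DeSalvo--Pak strategy using the Lehmer formula \re{p:lehmer-rademacher}. The inequality $\mathcal{C}(p(n))\le 0$ is equivalent to $\log p(n+1) + \log p(n-1) \le 2\log p(n)$, and the natural way to attack it is to sandwich
\[
T(n) \;\le\; \log p(n) \;\le\; U(n)
\]
between two explicit real-valued functions and then prove the stronger inequality $U(n+1)+U(n-1) \le 2T(n)$ for all $n$ past some explicit threshold $n_0$.

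First I would truncate \re{p:lehmer-rademacher} at $N=1$. Since $A_1^*(n)=1$, this isolates the explicit main term
\[
M(n) := \frac{\sqrt{12}}{24n-1}\left[\left(1-\frac{1}{\mu}\right)e^{\mu} + \left(1+\frac{1}{\mu}\right)e^{-\mu}\right],
\]
and the remainder $\dot{R}_2(n,1)$, being the sum of the $k\ge 2$ summands of the convergent Rademacher expansion, is of order $e^{\mu/2}/\sqrt{n}$ and therefore exponentially smaller than $M(n)\sim e^{\mu}/n$. Combining this with crude bounds on the Kloosterman-type sums $A_k^*(n)$ for $k\ge 2$ yields a multiplicative envelope
\[
M(n)\bigl(1-\eta(n)\bigr) \;\le\; p(n) \;\le\; M(n)\bigl(1+\eta(n)\bigr)
\]
with $\eta(n)=O\!\left(e^{-c\sqrt{n}}\right)$ for an explicit constant $c>0$. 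Taking logarithms supplies $T$ and $U$ that both agree, up to an exponentially small correction, with
\[
\log M(n) \;=\; \mu(n) + \log\!\left(\tfrac{\sqrt{12}}{24n-1}\right) + \log\!\left(1-\tfrac{1}{\mu}\right) + O\bigl(e^{-2\mu}\bigr).
\]

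Next I would analyze the discrete Laplacian of these envelopes. The dominant contribution is $\mathcal{C}(\mu(n))$, which a short Taylor expansion shows to be asymptotic to $-\tfrac{\pi}{4}\sqrt{2/3}\,n^{-3/2}$. The logarithmic piece $-\log(24n-1)$ and the subleading $\log(1-1/\mu)$ each contribute only $O(n^{-2})$, and the exponentially small gap between $U$ and $T$ is negligible. Since $n^{-3/2}$ strictly dominates $n^{-2}$ as $n\to\infty$, the quantity $U(n+1)+U(n-1)-2T(n)$ becomes and remains strictly negative once $n$ exceeds an explicit threshold $n_0$. The finitely many remaining cases $25<n\le n_0$ are then verified by direct numerical evaluation of $p(n)$.

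The main technical obstacle lies in calibrating the envelope width $U-T$: it must be of smaller order than the negative main contribution $\Theta(n^{-3/2})$ in $\mathcal{C}(\log M(n))$, so any crude tail bound that fails to exploit the exponential decay $e^{\mu/k}$ in the $k$-th Rademacher summand will destroy the inequality. One also cannot afford to discard the subleading $\mu^{-1}$-terms inside the bracket of \re{p:lehmer-rademacher}: they contribute at order $n^{-2}$ to $\mathcal{C}$, the same order as the logarithmic piece, and must be tracked throughout the argument. Sloppy bounding at either the remainder stage or in the expansion of $\log(1-1/\mu)$ collapses the final inequality.
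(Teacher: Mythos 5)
Your outline follows essentially the strategy the paper attributes to DeSalvo and Pak: truncate Lehmer's formula at a small $N$, isolate the explicit main term, bound the tail exponentially, and show that the discrete Laplacian of the log of the main term is negative and of larger order than the envelope width. Your multiplicative envelope $M(n)(1\pm\eta(n))$ followed by taking logarithms is equivalent to the paper's decomposition $p(n)=T(n)\bigl(1+R(n)/T(n)\bigr)$ combined with the identity $\mathcal{C}(ab)=\mathcal{C}(a)+\mathcal{C}(b)$ (cf.\ \Cref{misc:composition}), so the architecture matches.

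The genuine gap is in how you bound $\mathcal{C}(\log M(n))$. You invoke ``a short Taylor expansion'' to assert that the discrete Laplacian of $\mu(n)$ is asymptotic to $-\tfrac{\pi}{4}\sqrt{2/3}\,n^{-3/2}$ and then claim the quantity ``becomes and remains strictly negative once $n$ exceeds an explicit threshold $n_0$.'' Passing from a second-derivative asymptotic to an effective, uniform upper bound on the discrete Laplacian with a computable $n_0$ is precisely the step that requires a lemma, and that is what \Cref{DP:lemma} supplies: under monotonicity hypotheses on $f'$ and $f''$ one gets
\[
f''(x-1) < f(x+1)-2f(x)+f(x-1) < f''(x+1),
\]
an explicit two-sided sandwich of the discrete Laplacian by values of the continuous second derivative. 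The paper's overpartition analogue first verifies those hypotheses in \Cref{f:1} and then reads off the upper bound directly from $f''(n+1)$ in \Cref{OP:T}. Your Taylor sketch captures the right intuition, but without this lemma or a rigorous Lagrange-remainder estimate controlling $f'''$ on $[n-1,n+1]$, there is no justification that the inequality ``remains strictly negative,'' no explicit $n_0$, and therefore no finite list of residual cases $25<n\le n_0$ to hand off to numerical verification. Supplying \Cref{DP:lemma} (and checking its hypotheses for $f=\log M$) is exactly what closes this gap.
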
 
They show this by reorganizing $p(n)$ into a main part $T(n)$
and an error term $R(n)$.
Then they use \re{DP:lemma} to get an upper and lower bound for   $\mathcal{C}(T(n))$ which leads to upper and lower bound for $\mathcal{C} (p(n)) $. In this paper we will derive a similar Statement for overpartitions by a careful analysis of the Rademacher-type series given by Sills\cite{s1}.
\begin{lemma}[DeSalvo and Pak]\label{DP:lemma}
	Suppose $f(x)$ is a positive, increasing function with two continuous derivates for all $x>0$, and that $f'(x)>0$ and decreasing and $f''(x)<0$ is increasing for all $x\ge1$. Then for all $ x>1$
	$$f''(x-1) <f(x+1)-2f(x)+f(x-1)<f''(x+1) .$$
\end{lemma}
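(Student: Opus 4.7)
The plan is to express the second central difference $f(x+1)-2f(x)+f(x-1)$ as a double integral of $f''$ and then pinch it between $f''(x-1)$ and $f''(x+1)$ using the monotonicity hypothesis on $f''$.

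First, I would apply the fundamental theorem of calculus twice. Writing
\[
f(x+1)-2f(x)+f(x-1) = [f(x+1)-f(x)] - [f(x)-f(x-1)],
\]
converting each bracket to an integral of $f'$, and substituting $u=t+1$ in the second integral to align the domains, one arrives at
\[
f(x+1)-2f(x)+f(x-1) = \int_{x}^{x+1} [f'(t)-f'(t-1)]\, dt = \int_{x}^{x+1}\!\int_{t-1}^{t} f''(s)\, ds\, dt,
\]
where the inner variable $s$ ranges over $[t-1,t]\subset[x-1,x+1]$ as $t$ varies over $[x,x+1]$.

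Next, I would invoke the hypothesis that $f''$ is strictly increasing on $[x-1,x+1]$. For the lower bound, on the integration region we have $s\ge t-1\ge x-1$, hence $f''(s)\ge f''(x-1)$; since the region has area $1$, this yields $f(x+1)-2f(x)+f(x-1)\ge f''(x-1)$, with strict inequality because $f''$ strictly exceeds $f''(x-1)$ on a set of positive measure. Symmetrically, $s\le t\le x+1$ forces $f''(s)\le f''(x+1)$, giving the upper bound by the same reasoning.

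I expect no serious obstacle: the argument is essentially the fundamental theorem of calculus combined with the monotonicity of $f''$, and the only mildly delicate point is justifying strict (rather than weak) inequality, which follows from the strict monotonicity of $f''$. The positivity of $f$ and $f'$ and the sign of $f''$ itself are not needed for this particular inequality, but they will matter when the lemma is later applied to the main term of the Rademacher-type series for $\overline{p}(n)$.
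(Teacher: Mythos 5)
The paper does not prove this lemma; it is stated as a citation to DeSalvo and Pak \cite{main}, so there is no in-paper proof to compare against. Your argument is nonetheless correct and self-contained: the representation
\[
f(x+1)-2f(x)+f(x-1)=\int_{x}^{x+1}\int_{t-1}^{t}f''(s)\,ds\,dt
\]
follows from two applications of the fundamental theorem of calculus (justified by the assumed continuity of $f''$), the region of integration lies inside the strip $x-1\le s\le x+1$ and has total measure $1$, and the monotonicity of $f''$ then pinches the integral between $f''(x-1)$ and $f''(x+1)$. Your remark about strictness is the right one to make: the strict inequalities in the lemma's conclusion require $f''$ to be strictly increasing (or at least non-constant on $[x-1,x+1]$), which is how DeSalvo and Pak intend the hypothesis and how it is used in this paper. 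You also correctly observe that the positivity of $f$, $f'$ and the negativity of $f''$ play no role in this inequality itself — those hypotheses matter only in the subsequent application to $\widehat{T}(n)$. One stylistic alternative worth knowing: Taylor's theorem with integral remainder gives the weighted one-variable form
\[
f(x+1)-2f(x)+f(x-1)=\int_{x-1}^{x+1}\bigl(1-|t-x|\bigr)f''(t)\,dt,
\]
with the weight integrating to $1$, which reaches the same conclusion without Fubini; it is the same idea in a slightly different dress.
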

In this paper we extend this result to overpartitions.  
  An \textbf{overpartition} of a positive integer $n$  is a partition of $n$ in which the last occurrence of a number can be distinguished, which we do by overlining it. For example, the overpartitions of 4 are 
\begin{align*}
	&\lbrace 1,1,1,1\rbrace, \lbrace 1,1,1,\overline{1}\rbrace, \lbrace 1,1,2\rbrace , \lbrace 1,1,\overline{2}\rbrace, \lbrace 1,\overline{1},2\rbrace, \lbrace 1,\overline{1},\overline{2}\rbrace, \lbrace 2,2\rbrace, \lbrace 2,\overline{2}\rbrace, \lbrace  1,3\rbrace, \lbrace 1 ,\overline{3}\rbrace,\\
	& \lbrace \overline{1},3 \rbrace, \lbrace \overline{1},\overline{3}\rbrace, \lbrace 4 \rbrace \text{ and } \lbrace \overline{4} \rbrace.
\end{align*}
 Commonly, the number of overpartitions of a positive integer $n$ is denoted by $\overline{p}(n)$.
 Sills \cite{s1} rediscovered Zuckermann's \cite{zuckerman} formula for the overpartition and pointed out that it is indeed a Rademacher-type series

 \begin{equation}\label{op:rademacher}
 \op = \Eop,
 \end{equation}
 by using  the generating function for the overpartition
 \begin{equation}\label{genfnc:op}
 \sum\limits_{n\ge1}\op q^n =\prod\limits_{n\ge 1}\frac{(1+q^n)}{(1-q^n)},
 \end{equation}
 and the circle method.

In this paper, our main result is the following.
\begin{thm}\label{main1}
	The function $\op $ is log-concave for $n\ge 2$.
\end{thm}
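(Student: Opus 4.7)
The plan is to adapt the DeSalvo--Pak strategy from \cite{main} to overpartitions, with Sills' Rademacher series \re{op:rademacher} in place of \re{p:lehmer-rademacher}. First, I would Taylor-expand the $\sinh$ in \re{op:rademacher} and separate off the exponentially dominant contribution, producing a Lehmer-style decomposition
$$\op = \EopL,$$
in which the bracketed expression mirrors the main part of \re{p:lehmer-rademacher}, while $\opR$ and the tail
$$\EopR$$
collect a super-exponentially small correction.

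Next, taking logarithms, I would write $\log \op = \opTT(n) + E(n)$, where $\opTT(n)$ is a main term of the form $\opT{n}{c}$, essentially the logarithm of the $k=1$ contribution augmented by a fixed lower-order correction governed by the constant $c$, and $E(n)$ is the remainder. Since $\op \sim e^{\hm}/(8n)$, the function $\opTT$ has leading behavior $\hm-\log(8n)$, so it is positive, strictly increasing, and strictly concave, with $\opTT'$ decreasing and $\opTT''$ increasing, for all $n$ beyond an explicit threshold $n_0$. Applying \re{DP:lemma} to $\opTT$ then sandwiches $\mathcal{C}(\opTT(n))$ between $\opTT''(n+1)$ and $\opTT''(n-1)$, both negative and of order $n^{-3/2}$.

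Upgrading this sandwich to a bound on $\mathcal{C}(\op)$ amounts to showing that the second difference $|E(n+1)-2E(n)+E(n-1)|$ is strictly smaller than $|\opTT''(n+1)|$ for $n \ge n_0$. This in turn follows from (i) uniform trivial bounds on the Kloosterman-type sums $\opAs$, (ii) the rapid decay of the $\sinh$ Taylor tail governing the series $\EopR$ above, and (iii) the explicit shape of Lehmer's remainder $\opR$. A small fixed truncation $N$ should suffice to bound the second difference of the error by a quantity of smaller order than $n^{-3/2}$.

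The main obstacle will be the bookkeeping of explicit constants: one must pin down $n_0$ small enough that the finite residual range $2 \le n < n_0$ can be handled by direct computation of $\op$ from the generating function \re{genfnc:op}. This requires careful tracking of every constant in the estimates on $|\opA|$, on the $\sinh$ Taylor remainder, and on a robust lower bound for $|\opTT''(n+1)|$ (an elementary rational inequality in $\sqrt{n}$). Once $n_0$ is pinned down, the finitely many remaining values of $n$ are handled mechanically, which concludes the proof for all $n \ge 2$.
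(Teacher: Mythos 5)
Your proposal follows essentially the same route as the paper: split Sills' series Lehmer-style into a dominant $k=1$ main term plus a small remainder, apply Lemma~\ref{DP:lemma} to the logarithm of the main term (which the paper calls $\widehat{T}(n)$), bound the second difference of the error using the trivial bound $|\widehat{A}_k(n)|\le k$ and the rapid decay of the $\sinh$ tail, and finish the small cases $n=2,3$ by direct computation. The only cosmetic difference is that the paper phrases the error-absorption step multiplicatively, via $\op=\widehat{T}(n)\bigl(1+\widehat{R}(n)/\widehat{T}(n)\bigr)$ and the additivity of $\mathcal{C}$ under products (Lemma~\ref{misc:composition}), rather than additively on the log side, and its main term keeps both $k=1$ exponentials $e^{\pm\hm}$ rather than attaching a $k=3$-style correction.
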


\Cref{t1} illustrates the \Cref{main1} and the behavior of $\op$ and $p(n)$ for increasing $n$.\\
\begin{table}[h!]
	\caption{Comparison of $p(n)$ and $\op $ and their image under $\mathcal{C}$.}
	\begin{center}
	\label{t1}
		$\begin{array}{|c|c|c|c|c|}
		\hline
		n & p(n) & \op & \mathcal{C}(p(n)) & \mathcal{C}(\overline{p}(n))\\
		\hline
		\hline
		2 & 2 & 4 & -0.287682 & 0 \\ \hline
		3 & 3 & 8 & 0.105361 & -0.133531 \\ \hline
		4 & 5 & 14 & -0.174353 & -0.0206193 \\ \hline
		5 & 7 & 24 & 0.115513 & -0.0281709 \\ \hline
		6 & 11 & 40 & -0.14183 & -0.040822 \\ \hline
		7 & 15 & 64 & 0.0728373 & -0.0237165 \\ \hline
		8 & 22 & 100 & -0.0728373 & -0.0145047 \\ \hline
		9 & 30 & 154 & 0.0263173 & -0.0219976 \\ \hline
		10 & 42 & 232 & -0.0487902 & -0.0158805 \\ \hline
		15 & 176 & 1472 & 0.0067245 & -0.0103469 \\ \hline
		20 & 627 & 7336 & -0.0129263 & -0.0062498 \\ \hline
		25 & 1958 & 31066 & 0.000765534 & -0.0047666 \\ \hline
		30 & 5604 & 116624 & -0.00546266 & -0.0037539 \\ \hline
		35 & 14883 & 398640 & -0.000934141 & -0.00300351 \\ \hline
		40 & 37338 & 1263272 & -0.00273332 & -0.00250508 \\ \hline
		45 & 89134 & 3759240 & -0.00120242 & -0.00212943 \\ \hline
		50 & 204226 & 10605564 & -0.00173137 & -0.0018351 \\ 
		\hline 
		\end{array}$
	\end{center}
\end{table}
This paper is a condensed version of my Diploma thesis at the University of Cologne in 2014 which was supervised by Professor K. Bringmann and Dr. L. Rolen and is structured as follows. 
 First, we elaborate on an error bound for $\op$, and then we split $\op $ into the main term $\widehat{T}(n)$ and an reminder term $\widehat{R}(n)$ and show that $\widehat{T}(n)$ is itself log-concave.
Finally, we deduce the log-concavity of $\op $ from the log-concavity of $\widehat{T}(n)$ and bounding $\widehat{R}(n)$.

\section{The error term of $\overline{p}(n)$}
In this section, we provide an analoguous error term for the overpartition function as Lehmer did in \cite{l1} for the partition function, which is labeled $R_2(n,N)$ in this paper (see (\ref{p:lehmer-rademacher})). \\
Our first step is to calculate  the derivative in \re{op:rademacher}: 
$$\frac{d}{dn}\left(\frac{\sinh(\hm/k)}{\sqrt{n}}\right) = 
\frac{\pi}{2kn}\left(
\cosh\left(\frac{\hm}{k}\right)-\frac{k}{\hm} \sinh\left(\frac{\hm}{k}\right)
\right), $$
where $\hm := \hm(n)=\pi\sqrt{n}$.
We then let
$$\widehat{A}_k(n):= \sum_{0 \le h<k \atop (h,k)=1}\limits\frac{\omega(h,k)^2}{\omega(2h,k)}e(-nh/k),$$
 and split up (\ref{op:rademacher}) so that for any integer $N\ge 1$,
\begin{align}
\op =&\frac{1}{2\pi}\sum_{k \ge 1 \atop 2 \nmid k }\sqrt{k}\widehat{A}_k(n)\frac{d}{dn}\left(\frac{\sinh(\hm/k)}{\sqrt{n}}\right) \nonumber \\
=& \frac{1}{2\pi}\sum_{k \ge 1 \atop 2 \nmid k }^N \sqrt{k}\widehat{A}_k(n)\frac{d}{dn}\left(\frac{\sinh(\hm/k)}{\sqrt{n}}\right) 
 + 
\underbrace{\frac{1}{4n}\sum_{k \ge N+1 \atop 2 \nmid k }\frac{\widehat{A}_k(n)}{\sqrt{k}}\left(
\cosh\left(\frac{\hm}{k}\right)-\frac{k}{\hm} \sinh\left(\frac{\hm}{k}\right)
\right)}_{=:\widehat{R}_2(n,N)}.\label{op:restterm}
\end{align}
To get a useful error bound, we want to estimate  $\left|R_2(n,N)\right|$ by comparing with an elementary function.
This is done in two steps. First, we state two lemmas, beginning with the following one. For now we set $f_n(k):=\cosh\left(\frac{\hm}{k}\right)-\frac{k}{\hm}\sinh \left(\frac{\hm}{k}\right)$ and prove the following Lemma.
\begin{lemma}\label{lemma_positive}
We have $f_n(k) \ge 0$ for all $k\in \R \backslash \lbrace 0 \rbrace$.
\end{lemma}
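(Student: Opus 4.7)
The plan is to reduce the inequality to a one-variable statement in $x:=\hat{\mu}/k$ and then either use a Taylor expansion or a short calculus argument to see that the resulting expression is non-negative.

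First I would substitute $x = \hat{\mu}/k$, so that
\[
f_n(k) \;=\; \cosh(x) - \frac{\sinh(x)}{x}.
\]
Since $\hat{\mu}>0$ is fixed, letting $k$ range over $\R\setminus\{0\}$ just lets $x$ range over $\R\setminus\{0\}$. Moreover $\cosh$ is even and $\sinh(x)/x$ is even, so it suffices to check the inequality for $x>0$.

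Next I would expand both terms as power series:
\[
\cosh(x) - \frac{\sinh(x)}{x}
\;=\; \sum_{j\ge 0}\frac{x^{2j}}{(2j)!} \;-\; \sum_{j\ge 0}\frac{x^{2j}}{(2j+1)!}
\;=\; \sum_{j\ge 0}\Bigl(\frac{1}{(2j)!}-\frac{1}{(2j+1)!}\Bigr)x^{2j}.
\]
Combining the coefficients over a common denominator gives $\tfrac{2j}{(2j+1)!}$, hence
\[
\cosh(x) - \frac{\sinh(x)}{x} \;=\; \sum_{j\ge 1}\frac{2j\,x^{2j}}{(2j+1)!}.
\]
This is a sum of non-negative terms, so $f_n(k)\ge 0$, with equality only when $x=0$, which is excluded.

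As a sanity check (and an alternative route should I prefer to avoid invoking the Taylor series), I would note that $g(x):=x\cosh(x)-\sinh(x)$ satisfies $g(0)=0$ and $g'(x)=x\sinh(x)\ge 0$ for $x\ge 0$, so $g(x)\ge 0$ for $x\ge 0$ and, by oddness of $g$, $g$ has the same sign as $x$ on all of $\R$. Since $f_n(k) = g(x)/x$ with $x=\hat{\mu}/k$, the quotient is non-negative for every $x\ne 0$. I do not anticipate a genuine obstacle here: the only thing to be slightly careful about is that $k$ is allowed to be negative, but the substitution makes it transparent that the sign of $k$ is immaterial because the expression depends only on $x^2$.
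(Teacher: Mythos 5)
Your primary argument (rewriting $\cosh$ and $\sinh$ as power series and collecting terms to get $\sum_{j\ge 1}\tfrac{2j\,x^{2j}}{(2j+1)!}\ge 0$) is exactly the route the paper takes, and it is correct. The additional calculus-based sanity check is a fine alternative but not needed.
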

The previous Lemma follows immediately by rewriting $ \cosh(x) $ and $ \sinh(x) $ in there series representation.
The next step is to take a closer look at
 $\frac{\omega(h,k)^2}{\omega(2h,k)}e(-nh/k)$. To this end, we use that $ \omega(h,k) $ is a certain 24th root of unity,
and the fact that $\left| \frac{\omega(h,k)^2}{\omega(2h,k)}e(-nh/k)\right|=1$.  
Thus, we can trivially bound $\left|\widehat{A}_k(n) \right|$ from above by $k$. Using this and \Cref{lemma_positive}, we have
$$\left|\sum_{k\ge N+1} \frac{\widehat{A}_k(n)}{\sqrt{k}}f_n(k)\right|\le \sum_{k\ge N+1} |\sqrt{k}f_n(k)| = \sum_{k\ge N+1} \sqrt{k} f_n(k)=:S.$$
To simplify further, we need the absolute convergence of $S$, which is true if $S$ is convergent because of \Cref{lemma_positive}.\\

\begin{lemma}\label{konv}
The sum $S$ is convergent.
\end{lemma}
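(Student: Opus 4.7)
The plan is to exploit the same Taylor-series identity that underlies \Cref{lemma_positive}: combining the power series of $\cosh$ and $\sinh$ gives
$$f_n(k) = \sum_{j\ge 0}\f{(\hm/k)^{2j}}{(2j)!} - \sum_{j\ge 0}\f{(\hm/k)^{2j}}{(2j+1)!} = \sum_{j\ge 1}\f{2j}{(2j+1)!}\left(\f{\hm}{k}\right)^{2j},$$
since the $j=0$ terms cancel. This representation simultaneously reproves non-negativity and, more importantly, shows that $f_n(k) = O(k^{-2})$ as $k\to\infty$, with $n$ (and hence $\hm = \pi\sqrt{n}$) held fixed.

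Next, I would bound the tail of $S$ explicitly. For all $k$ large enough that $\hm/k\le 1$ the inequality $(\hm/k)^{2j}\le (\hm/k)^2$ holds for every $j\ge 1$, so
$$f_n(k) \;\le\; \left(\f{\hm}{k}\right)^2 \sum_{j\ge 1}\f{2j}{(2j+1)!} \;=\; \f{C\,\hm^2}{k^2},$$
where $C := \sum_{j\ge 1}\left(\f{1}{(2j)!}-\f{1}{(2j+1)!}\right) = \cosh(1)-\sinh(1) = e^{-1}$ (in particular finite). Consequently $\sqrt{k}\,f_n(k)\le C\hm^2/k^{3/2}$, and since $\sum_{k} k^{-3/2}$ converges by comparison with $\int_1^{\infty} x^{-3/2}\,dx$, the tail of $S$ converges. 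Any finitely many initial terms with $\hm/k > 1$ (only relevant when $N+1\le\hm$) each contribute a finite amount, so $S$ itself is finite.

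There is essentially no obstacle once the rewriting as a nonnegative power series in $\hm/k$ is in hand; the only mild bookkeeping point is splitting the sum into the finite regime $k<\hm$ and the tail $k\ge \hm$ so that the geometric bound $(\hm/k)^{2j}\le(\hm/k)^2$ is justified. A slightly coarser bound like $f_n(k)\le(\hm/k)^2\eexp{\hm/k}/3$ would work equally well and even avoid the case split, but the argument above is the cleanest route to the $k^{-3/2}$ decay, which is what makes $S$ convergent.
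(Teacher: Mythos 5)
Your proof is correct and fills a gap the paper leaves implicit: the paper states \Cref{konv} without an explicit argument and then uses it to justify the interchange of summation in \re{vertauscht}, so your direct bound is a legitimate way to discharge the lemma. The key difference from the paper's surrounding treatment is methodological. Where the paper (in the theorem that follows) first swaps the double sum over $k$ and $m$ and then bounds each inner $k$-sum by an integral $\int_N^\infty x^{1/2-2m}\,dx$, you instead bound $f_n(k)$ itself, uniformly in the tail $k\ge\hm$, by pulling out $(\hm/k)^2$ and summing the remaining coefficients to $C=\cosh(1)-\sinh(1)=e^{-1}$; this gives the clean estimate $\sqrt{k}\,f_n(k)\le C\hm^2 k^{-3/2}$ and hence convergence of $S$ by comparison with a $p$-series. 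Your approach has the pedagogical advantage that it does not itself rely on any interchange of limits (which is exactly what the lemma is meant to justify), so it is logically cleaner as a proof of \Cref{konv}. Your arithmetic checks out: the $j=0$ term of the $\cosh-\frac{k}{\hm}\sinh$ expansion cancels, $\frac{1}{(2j)!}-\frac{1}{(2j+1)!}=\frac{2j}{(2j+1)!}$, and $C=e^{-1}$ is finite; the finitely many initial terms with $k<\hm$ contribute finitely and do not affect convergence.
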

It follows from \Cref{lemma_positive,konv}  that we can rearrange  terms and obtain
\begin{equation}\label{vertauscht}
\sum_{k\ge N+1 \atop 2\nmid k }\sum_{m\ge 1}\frac{\sqrt{k}
\left(\frac{\hm}{k}\right)^{2m}(2m)
}{(2m+1)!} =
\sum_{m\ge 1}\sum_{k\ge N+1 \atop 2\nmid k }\frac{
\sqrt{k}\left(\frac{\hm}{k}\right)^{2m}(2m)
}{(2m+1)!}.
\end{equation}
Finally, we put this all together and prove the  following theorem.
\begin{thm}
Let $\op $ be defined as in \re{op:rademacher}.
 Then 
  $$\op = \frac{1}{2\pi}\sum_{k \ge 1 \atop 2 \nmid k }^N\limits \sqrt{k}\widehat{A}_k(n)\frac{d}{dn}\left(\frac{\sinh(\hm/k)}{\sqrt{n}}\right) +R_2(n,N),$$ where $\left|R_2(n,N)\right| \le \frac{N^{5/2}}{n\hm}\sinh\left(\frac{\hm}{N}\right).$
\end{thm}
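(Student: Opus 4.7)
The starting point is the decomposition already written out in (\ref{op:restterm}), which identifies the remainder as
$$R_2(n,N) = \frac{1}{4n}\sum_{k\ge N+1, \, 2\nmid k} \frac{\widehat{A}_k(n)}{\sqrt{k}}\, f_n(k),$$
so the whole task reduces to bounding this tail sum. My first move is to combine the trivial estimate $|\widehat{A}_k(n)|\le k$ (which follows because $\omega(h,k)/|\omega(h,k)|$ is a $24$th root of unity and the defining sum has at most $k$ terms) with the positivity provided by Lemma \ref{lemma_positive} to remove the absolute values and obtain
$$|R_2(n,N)| \le \frac{1}{4n}\sum_{k\ge N+1, \, 2\nmid k} \sqrt{k}\, f_n(k).$$

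Next, I would expand $f_n(k)=\cosh(\hm/k)-(k/\hm)\sinh(\hm/k)$ termwise. The identity $\frac{1}{(2m)!}-\frac{1}{(2m+1)!}=\frac{2m}{(2m+1)!}$ gives
$$f_n(k) = \sum_{m\ge 1}\frac{(\hm/k)^{2m}(2m)}{(2m+1)!}.$$
Thanks to Lemma \ref{konv} we have absolute convergence, so the swap (\ref{vertauscht}) yields
$$\sum_{k\ge N+1, \, 2\nmid k} \sqrt{k}\, f_n(k) = \sum_{m\ge 1}\frac{\hm^{2m}(2m)}{(2m+1)!}\sum_{k\ge N+1, \, 2\nmid k} k^{1/2-2m}.$$

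For the inner sum I would pass to all $k\ge N+1$ and use integral comparison: since $k^{1/2-2m}$ is decreasing in $k$ for $m\ge 1$,
$$\sum_{k\ge N+1, \, 2\nmid k} k^{1/2-2m} \le \int_N^\infty x^{1/2-2m}\,dx = \frac{N^{3/2-2m}}{2m-3/2}.$$
The elementary inequality $\frac{2m}{2m-3/2}\le 4$ for $m\ge 1$ (the ratio decreases from its $m=1$ value of $4$) then yields
$$|R_2(n,N)| \le \frac{1}{n}\sum_{m\ge 1}\frac{\hm^{2m}N^{3/2-2m}}{(2m+1)!}.$$

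Finally I would recognize the right-hand side as a truncation of a $\sinh$ series. Indeed, extending the sum to include the (positive) $m=0$ term only enlarges the bound, and
$$\frac{1}{n}\sum_{m\ge 0}\frac{\hm^{2m}N^{3/2-2m}}{(2m+1)!} = \frac{N^{5/2}}{n\hm}\sum_{m\ge 0}\frac{(\hm/N)^{2m+1}}{(2m+1)!} = \frac{N^{5/2}}{n\hm}\sinh\!\left(\frac{\hm}{N}\right),$$
which is exactly the claimed estimate. The main obstacle is keeping track of the bookkeeping across the swap of summation and the integral estimate so that the surviving factors $N^{3/2-2m}$ and $\hm^{2m}/(2m+1)!$ reassemble into $\frac{N^{5/2}}{n\hm}\sinh(\hm/N)$; once one notices the prefactor trick $N^{5/2}/\hm$ needed to align indices, the remainder of the argument is routine.
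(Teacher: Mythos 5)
Your proof is correct and follows essentially the same route as the paper: bound $|\widehat A_k(n)|\le k$ and exploit the positivity of $f_n(k)$, expand $f_n(k)$ as a power series and swap summation order using \re{vertauscht}, compare the inner sum to $\int_N^\infty x^{1/2-2m}\,dx$, bound the resulting ratio $\frac{2m}{2m-3/2}=\frac{4m}{4m-3}$ by $4$, and enlarge by the $m=0$ term to reassemble the full $\sinh$ series. Every step matches the paper's argument, so nothing further is needed.
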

\begin{proof}
We have already seen that $\left|R_2(n,N)\right| \le S$. Using \re{vertauscht} and the fact
  $\frac{1}{k^\alpha}$ is monotonically decreasing and positive in $[N,\infty)$ for an $ \alpha >0 $, we obtain 
\begin{equation*}
\left| \sum_{m\ge 1}\sum _{k\ge N+1 \atop 2 \nmid k} \frac{\sqrt{k}\left(\frac{\hm}{k}\right)^{2m} (2m)}{(2m+1)!} \right| \le \left| \sum_{m\ge 1}\int\limits_{N}^{\infty} \frac{\sqrt{x}\left(\frac{\hm}{x}\right)^{2m} (2m)}{(2m+1)!}  \mathrm{d}x\right|.
\end{equation*}
Since $ \frac{\sqrt{x}\left(\frac{\hm}{x}\right)^{2m} 2m}{(2m+1)!}$ is decreasing and continuous  for $x \in [N,\infty)$, we find
\begin{equation*}
\int\limits_{N}^{\infty} \frac{\sqrt{x}\left(\frac{\hm}{x}\right)^{2m} (2m)}{(2m+1)!}  \mathrm{d}x= \left[ \frac{-4m \hm^{2m} x^{3/2-2m}}{(4m-3)(2m+1)!} \right]_N^\infty.
\end{equation*}
For $m>\frac{3}{4} $, we find
\begin{equation*}
\lim\limits_{x\rightarrow \infty} \frac{-4m \hm^{2m} x^{3/2-2m}}{(4m-3)(2m+1)!} = 0.
\end{equation*}
Finally, we have
\begin{align*}
\left|R_2(n,N)\right| &\le\frac{1}{4n}  \sum\limits_{m\ge 1}\frac{4m}{4m-3}\frac{\left( \frac{\hm}{N}\right)^{2m} }{(2m+1)!} \frac{N^{3/2}}{4n} \le \frac{N^{3/2}}{n} \sum\limits_{m\ge 0} \frac{\left( \frac{\hm}{N}\right)^{2m} }{(2m+1)!} \\
&= \frac{N^{5/2}}{n\hm}\sinh\left( \frac{\hm}{N}\right) ,
\end{align*}
which completes the proof.
\end{proof}
\section{Bounding the main term of $\op$}
This section provides the final expression of $\op$ as the sum of two functions $\widehat{T}(n)$ and $\widehat{R}(n)$  and concludes with a proof of the log-concavity of $\widehat{T}(n)$.
In order to proceed, we now evaluate $\frac{d}{dn}\left(
          \frac{\sinh\left(\frac{\hm}{k}\right)}{\sqrt{n}}\right)$ in another way by recalling the identity $\sinh\left(z\right) = \frac{e^z - e^{-z}}{2}$. This shows that
          \begin{align}
           \frac{d}{dn}\left(\frac{\sinh\left(\frac{\hm}{k}\right)}{\sqrt{n}}\right) & =  \frac{d}{dn}\left(\frac{e^{\frac{\hm}{k}}-e^{\frac{-\hm}{k}}}{2\sqrt{n}}\right)
           = \frac{\hm'}{2k\sqrt{n}}\left(e^{\frac{\hm}{k}}+e^{\frac{-\hm}{k}}\right)-\frac{\left(e^{\frac{\hm}{k}}-e^{\frac{-\hm}{k}}\right)}{4n^\frac{3}{2}}.\label{op:err:1}
           \end{align}
           If we now use $\frac{d}{dn}\hm= \frac{\pi}{2 \sqrt{n}}$, then \re{op:err:1} simplifies to 
           \begin{align}
           \frac{\pi}{4nk}\left(e^{\frac{\hm}{k}}+e^{\frac{-\hm}{k}}\right)-\frac{1}{4n^\frac{3}{2}}\left(e^{\frac{\hm}{k}}-e^{\frac{-\hm}{k}}\right) 
           = \frac{\pi}{4n}\left(\frac{1}{k}\left(e^{\frac{\hm}{k}}+e^{\frac{-\hm}{k}}\right)- \frac{1}{\hm}\left(e^{\frac{\hm}{k}}-e^{\frac{-\hm}{k}}\right)\right) \nonumber \\
             =  \frac{\pi}{4nk}\left(\left(e^{\frac{\hm}{k}}+e^{\frac{-\hm}{k}}\right)- \frac{k}{\hm}\left(e^{\frac{\hm}{k}}-e^{\frac{-\hm}{k}}\right)\right) \notag \\
             = \frac{\pi }{4nk}\left(
            \left(1+ \frac{k}{\hm}\right)e^\frac{-\hm}{k}
            +
            \left(1-\frac{k}{\hm}\right)e^\frac{\hm}{k}
            \right).\label{op:errortermableitung}
            \end{align}
            Substituting \re{op:errortermableitung} into \re{op:restterm} gives us
             \begin{align*}\label{opparts}
              \overline{p}(n) &= \frac{1}{8n}\sum\limits_{k= 1 \atop (k,2)=1}^{N}\frac{\widehat{A}_k(n)}{\sqrt{k}}\left(
                 \left(1+ \frac{k}{\hm}\right)e^\frac{-\hm}{k}
                 +
                 \left(1-\frac{k}{\hm}\right)e^\frac{\hm}{k}
                 \right) +R_2(n,N)\\
                 &=  \widehat{T}(n) + \widehat{R}(n),\notag
              \end{align*}
              where 
              $$\widehat{A}_k(n):= \sum\limits_{0\le h<k \atop (h,k)=1}\frac{\omega(h,k)^2}{\omega(2h,k)}e\left(-\frac{nh}{k}\right).$$
              Here, $\widehat{T}(n)$ and $\widehat{R}(n)$ are defined as
              \begin{align*}
              \widehat{T}(n)&:= \frac{1}{8n}\left\lbrace  e^{-\hm}+\left(1-\frac{1}{\hm}\right)e^{\hm} \right\rbrace ,\\
              \widehat{R}(n)&:=\frac{e^{-\hm}}{8n\hm} + R_2(n,3).
              \end{align*}
              
              In order to show the log-concavity of $\widehat{T}(n)$, we use \Cref{DP:lemma} and set 
              $$f(x):= \log(\widehat{T}(x)).$$
              First, we show that $f(x)$ fulfills the necessary preliminaries of \Cref{DP:lemma}.  
              \begin{lemma}\label{f:1}
              The function $f(x)$ satisfies the following properties: 
                            \begin{enumerate}[(i)]
                            	\item If $x \ge 1$, then  $f(x)$ is increasing and positive. \label{lem:I}
                            	\item For all $x>1$, the function $f'(x)$ is positive.\label{lem:II}
                            	\item If $x\ge 1$, then $f'(x)$ is decreasing. \label{lem:III}
                            	\item For all $x\ge1$, the function $f''(x)$ is negative. \label{lem:IV}
                            	\item If $x\ge 3$, then $f''(x)$ is increasing.\label{lem:V}
                            \end{enumerate}
              \end{lemma}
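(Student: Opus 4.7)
The plan is to make the dominant behaviour of $f$ explicit by factoring $e^{\hm}$ out of $\widehat T$, writing
\[
f(x)=\hm-\log(8x)+\log\!\bigl[(1-1/\hm)+e^{-2\hm}\bigr].
\]
For $x\ge 1$ one has $\hm\ge\pi$ and $e^{-2\hm}\le e^{-2\pi}<2\cdot 10^{-3}$, so every derivative of $f$ decomposes as the corresponding derivative of the ``main part'' $M(x):=\hm-\log(8x)+\log(1-1/\hm)$ plus an exponentially small tail. This reduces each of (i)--(v) to a one-variable calculus inequality in $\hm$, which I would handle in closed form.

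I would organise the argument so that the harder claims drive the easier ones. The anchor is part (iv): $f''(x)<0$ on $[1,\infty)$. Here the piece $\hm-\log(8x)$ contributes $-\pi/(4x^{3/2})+1/x^2$ to $f''$, which is in fact \emph{positive} for $x<16/\pi^2\approx 1.62$, so the negative sign of $f''$ near $x=1$ must come from $(\log(1-1/\hm))''=-\Theta(x^{-3/2})$. Once (iv) is established, (iii) is immediate, since $f''<0$ makes $f'$ strictly decreasing. Part (ii) follows from (iii) combined with the asymptotic $f'(x)\sim\pi/(2\sqrt{x})\to 0^+$: a strictly decreasing function tending to $0$ at infinity is strictly positive on the whole of $[1,\infty)$. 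Part (i) then reduces to combining (ii) with the numerical check
\[
f(1)=\log\!\Bigl(\tfrac{1}{8}\bigl(e^{-\pi}+(1-\tfrac{1}{\pi})e^{\pi}\bigr)\Bigr)>0.
\]
Finally (v) is proved by the same method as (iv), verifying $f'''(x)>0$ on $[3,\infty)$; at $x=3$ the terms from $\hm-\log(8x)$ contribute $3\pi/(8x^{5/2})-2/x^3$, which is positive once $x\ge 256/(9\pi^2)\approx 2.88$, and the positive contribution $(\log(1-1/\hm))'''$ of order $x^{-7/2}$ adds further slack.

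The main obstacle is the symbolic verification in (iv): the inequality $f''<0$ is not loose near $x=1$, so the $e^{-2\hm}$ contributions discarded in the heuristic must be tracked, not ignored. The cleanest route is to compute $f''$ symbolically, put the $M$-part over a common denominator to obtain an explicit rational function of $\hm$, and then bound the $e^{-2\hm}$ error term uniformly using $e^{-2\hm}\le e^{-2\pi}$, reducing (iv) to an explicit polynomial-in-$\hm$ inequality for $\hm\ge\pi$. Part (v) is strictly easier, thanks both to the additional power saving of $x^{-5/2}$ and to the later starting point $x=3$, so the same clear-denominators-and-estimate recipe suffices.
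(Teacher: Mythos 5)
Your proposal is correct in substance but takes a genuinely different route from the paper. The paper computes $f'$, $f''$ and $f'''$ directly as rational expressions in $\hm$ and $e^{\hm}$ (equations \re{borris}, \re{f'''} and the display between them), and then bounds the resulting polynomial numerators term by term (e.g.\ using \re{OP:low1}, \re{OP:low2} and \re{p_1_lower}--\re{p_3_lower}); it also proves the five claims in the written order, establishing (ii) by a separate sign analysis of the numerator and denominator of $f'$ rather than deducing it from (iii). Your decomposition $f(x)=\hm-\log(8x)+\log\bigl[(1-1/\hm)+e^{-2\hm}\bigr]$ factors $e^{\hm}$ out of $\widehat T$, so the ``main part'' $M(x)=\hm-\log(8x)+\log(1-1/\hm)$ is purely rational in $\hm$ and the exponential enters only as a uniformly tiny perturbation $e^{-2\hm}\le e^{-2\pi}$; this buys much cleaner closed forms (for instance $4x^2M''(x)=-\hm+4-(3\hm-2)/(\hm-1)^2$ reduces (iv), modulo the perturbation, to a rational inequality in $\hm$ that is transparently negative for $\hm\ge\pi$) and a tighter logical skeleton, (iv)$\Rightarrow$(iii)$\Rightarrow$(ii) via the limit $f'(x)\to 0^+$ and (i) from (ii) plus the single value $f(1)>0$. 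What the paper's direct method buys in exchange is that it never has to track the perturbation at all, since $e^{-2\hm}$ is absorbed in the exact numerators from the start; the price is the collection of ad hoc polynomial inequalities. One small slip in your sketch: $(\log(1-1/\hm))''=-\Theta(x^{-5/2})$, not $-\Theta(x^{-3/2})$ (and indeed $(\log(1-1/\hm))'''=\Theta(x^{-7/2})$ as you say), but this does not affect the sign analysis, which is what the argument actually uses. The one real item of work you have flagged but not carried out is the uniform control of the $e^{-2\hm}$ contributions in (iv), which is not loose near $x=1$ (at $x=1$ one has $M''(1)\approx -0.19$ against a perturbation below $2\cdot 10^{-3}$, so it does close, but this needs to be made explicit).
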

              \begin{proof}
              	\re{lem:I}+\re{lem:II}:
                             We notice that 
                            \begin{equation}\label[equation]{OP:Tupp}
                            \widehat{T}(x)>\frac{1}{2}e^{\hm(x)}>1 \text{ for all }x>0,
                            \end{equation}
                            and hence
                            $$f(x)>\hm(x) - \log(2)>0 \text{ for all }x>1>\left(\frac{\log(2)}{\pi}\right)^2,$$
                            with $\hm(x)= \pi \sqrt{x}$.
                            This establishes positivity.
                            
                            As the composition of a couple twice differentiable function, $f(x)$ is also twice differentiable for all $x>0$.
                            
                           We show now that $f(x)$ is increasing by proving that $f'(x)>0$ for all $x \ge 1$.
                            By simple calculus, we obtain 
                                          \begin{equation}\label{borris}
                                          f'(x)=\frac{e^{2\hm(x)} -2\hm(x)^2}{2x\left(\hm(x) + e^{2\hm(x)}(\hm(x) -1)\right)}.
                                          \end{equation}
                                          Next we notice that for $x\ge 1>(1/\pi)^2$, 
                                          \begin{equation}\label{hm>1}
                                          	\hm(x)>1
                                          \end{equation}
                                          and so 
                                          $$2x\left(\hm(x) + e^{2\hm(x)}(\hm(x) -1)\right) >0.$$
                                          To show the denominator of \re{borris} is positive, we notice 
                                          $$\eexp{2\hm(x)} = \sum\limits_{n\ge 0}\frac{(2\hm(x))^n}{n!}> 1+2\hm(x)+2\hm(x)^2$$
                                          and obtain
                                          $$\eexp{3\hm(x)}-2\hm(x)^2\ge 1+2\hm(x)+2\hm(x)^2-2\hm(x)^2>0$$
                                          for all $x\in \R$, and so we have that $f(x)$ is increasing for $x\ge 1$.\\

	\re{lem:III}+\re{lem:IV}:
              We show that $f'(x)$ is decreasing by observing that $f''(x)<0$ for all $x \ge 1$.
              We calculate $f''(x)$ directly as
                            \begin{equation*}
                            f''(x)=\frac{
                            2\hm(x)^3+e^{4\hm(x)}(2-3\hm(x))+e^{2\hm(x)}\hm(x)(-3+2\hm(x)-2\hm(x)^2+4\hm(x)^3)
                            }{
                            4\left(x^{3/2}\pi +e^{2\hm(x)}x (\hm(x)-1)\right)^2
                            }.
                            \end{equation*}
                           Note that the denominator is positive for all $x\in \R^+$. 
                            Next, we take into account that 
                            \begin{equation}
                            2-3\hm(x) < -2\hm(x) < 0 \text{ for } x\ge 1 > \left( \frac{2}{\pi}\right) ^2. \label{OP:low1}
                           \end{equation}
                            Using \re{hm>1}, we obtain 
                      \begin{align*}
                      	 -3+2\hm(x)-2\hm(x)^2+4\hm(x)^3  < 4\hm(x)^3 -3 \text{ for } x\ge 1> (1/\pi)^2,
                      \end{align*}
                      which brings us to 
                      \begin{equation}
                      	0\le -3+4\hm(x)^3 \le 5\hm(x) \label{OP:low2}
                      \end{equation}
                      for $x\ge 1 > \left( \frac{3}{4}\right) ^{2/3}\frac{1}{\pi ^2}$.
                            With \re{OP:low1} and \re{OP:low2}, we have
                            \begin{align}\label{OP:upper}
                           &  2\hm(x)^3+e^{4\hm(x)}(2-3\hm(x))+e^{2\hm(x)}\hm(x)(-3+2\hm(x)-2\hm(x)^2+4\hm(x)^3)\\ 
                           \le&   2\hm(x)^3-2\hm(x) e^{4\hm(x)}+5\hm(x)^6 e^{2\hm(x)} \notag 
                           \le  0  \notag,
                            \end{align}
                            for $x \ge 1 $.
                            Therefore, we have \re{lem:III} and \re{lem:IV}.

\re{lem:V}:
              We show that $f''(x)$ is increasing, which obviously follows of $f'''(x)>0$ for all $x\ge 1$. 
              We compute 
              \begin{equation}\label{f'''}
              f'''(x)= \frac{e^{6\hm(x)}p_1(x)+e^{2\hm(x)}\hm(x)^2p_2(x)+e^{4\hm(x)}p_3(x)-3\hm(x)^4              
              }{8(x^{3/2}\pi+e^{2\hm}x(\hm(x)-1))^{3}}
              \end{equation}
              with 
              \begin{align*}
              p_1(x) &:=  8-24\hm(x)+24\hm(x)^2-9\hm(x)^3+3\hm(x)^4,\\
              p_2(x) &:=  15-21\hm(x)+21\hm(x)^2-20\hm(x)^3+8\hm(x)^4,\\
              p_3(x) &:= -21\hm(x)+45\hm(x)^2-30\hm(x)^3+7\hm(x)^4+4\hm(x)^5-8\hm(x)^6.
              \end{align*}
              We note that the denominator of \re{f'''} is positive and nonzero for all $x\ge1$.
              Next, we will show that the numerator of \re{f'''} is positive. For this, we need
              \begin{align}
              p_1(x)>\hm(x)^4 \label{p_1_lower},\\
              p_2(x) > 4\hm(x)^4 \label{p_2_lower},\\
              p_3(x) > -8\hm(x)^6 \label{p_3_lower}.
              \end{align}
              We begin with the proof for \re{p_1_lower}. For $x\ge2>\frac{3}{\sqrt{2\pi}}$ we have $ \hm(x)-1 > 0,$ and we find 
              $$ 8 + 24 \hm(x)(\hm(x)-1)+\hm(x)^3(2\hm(x)-9)>0,$$
              which is equivalent to 
              $$ 8 -24 \hm(x) +24 \hm(x)^2 -9 \hm(x)^3 +2\hm(x)^4 >0,$$
              and can be rewritten as
              $ p_1(x) > \hm(x)^4.$

              Next, we prove \re{p_2_lower}. If $x>1>\sqrt{\frac{5}{2\pi}}$, then we find
             $$ 2\hm(x) > 5 \text{ and } \hm(x) -1 >0 $$
             which is equivalent to
              $$ 15 + 21\hm(x)(\hm(x)-1)+4\hm(x)^3(2\hm(x)-5) > 0$$
              and leads to $ p_2(x) >4\hm(x)^4$.
              
              Finally, we prove \re{p_3_lower}. For $x\ge 1$ we use $\hm(1)=\pi>3$, so that $\hm(x)^2>\hm(x)$. Thus,
              \begin{align*}
              p_3(x)+8\hm(x)^6 &> \ -21\hm(x)+45\hm(x)-30\hm(x)^3+21\hm(x)^3+4\hm(x)^5\\
              &> \ 24\hm(x)-9\hm(x)^3+36\hm(x)^3 
              = \ 24\hm(x)+27\hm(x)^3 
              > \ 0.             
              \end{align*}
              It remains to show that
              \begin{equation}\label{f>t>0}
              e^{6\hm(x)}\hm(x)^4+e^{2\hm(x)}\hm(x)^2 4\hm(x)^4-8e^{4\hm(x)}\hm(x)^6-3\hm(x)^4 > 0 .
              \end{equation}
              We do this by noticing that $e^{2\hm(x)}\hm(x)^2>3$ for $x\ge 1$, so that 
              \begin{equation*}
              e^{2\hm(x)}\hm(x)^2p_2(x)-3\hm(x)^4> 3\hm(x)^4 -3\hm(x)^4 = 0
              \end{equation*}
              and 
              \begin{equation}\label{argh}
              e^{6\hm(x)}\hm(x)^4-8\hm(x)^6 e^{4\hm(x)} = \hm(x)^4 e^{4\hm(x)}\left(e^{2\hm(x)}-\hm(x)^6\right) > 0 .
              \end{equation}
              We show \re{argh} by first noticing that $\eexp{2z}/z^6$ and $\hm(x)$ are increasing for $x\ge 1$. 
              Moreover, 
              $\frac{\eexp{2z}}{z^6} \rightarrow \infty,$
              and so 
             $\hm(x) \rightarrow \infty$
              for $x,z \rightarrow \infty$.
              Hence, there exists a $z_0 \in \R$ with 
              $$\frac{\eexp{2z}}{z^6} \ge 1 $$
              for $z\ge z_0$. There also exists an $x_0$ so that $\eexp{2\hm(x)}/\hm(x)^6>1$ for $x\ge x_0$, and when we have $\eexp{2\hm(3)}/\hm(3)^6> 1$,  we find 
              $$\frac{\eexp{2\hm(x)}}{\hm(x)^6}>1$$
              which is equivalent to
              $$\eexp{2\hm(x)}-\hm(x)^6>1$$
              for all $x\ge 3\ge x_0$.
	          This shows \re{f>t>0}, which indicates $f''(x)$ is increasing for $x\ge3$. This concludes the proof.
              \end{proof}
              
              Using \Cref{f:1}, we can  now prove the following theorem.
              
              \begin{thm}\label{OP:T}
              The function $\widehat{T}(n)$ is log-concave for all $n \ge 3$. In paticular, 
              $$\mathcal{C}(\widehat{T}(n)) \le \frac{2\hm^3-2\hm e^{4\hm}+5\hm^6 e^{2\hm}}{4n^2 e^{4\hm}(\hm -1)^2} \text{ for all } n\ge 3.$$
              \end{thm}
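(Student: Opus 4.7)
The plan is to set $f(x) := \log \widehat{T}(x)$ and apply the DeSalvo-Pak Lemma \ref{DP:lemma} directly. By \Cref{f:1}, the function $f$ is positive and increasing on $[1,\infty)$, with $f'$ positive and decreasing, $f'' < 0$, and $f''$ increasing on $[3,\infty)$. Hence all hypotheses of \Cref{DP:lemma} are met on $[3,\infty)$, and it yields
$$\mathcal{C}(\widehat{T}(n)) = f(n+1) - 2f(n) + f(n-1) \le f''(n+1) < 0$$
for every $n \ge 4$. The case $n = 3$ is not covered by this argument, since \Cref{f:1} only secures $f''$ increasing for $x \ge 3$ whereas here $n-1 = 2$; it must therefore be closed off by a direct numerical comparison of $\widehat{T}(2)$, $\widehat{T}(3)$, and $\widehat{T}(4)$.

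For the explicit upper bound, I would reuse the closed-form expression for $f''(x)$ computed in the proof of \Cref{f:1}, together with the estimates \re{OP:low1} and \re{OP:low2} established there. These imply that the numerator of $f''(x)$ is bounded above by the negative quantity $2\hm^3 - 2\hm e^{4\hm} + 5\hm^6 e^{2\hm}$. For the denominator $4(x^{3/2}\pi + x e^{2\hm}(\hm-1))^2$, I would use $x^{3/2}\pi = x\hm$ and discard the positive $x\hm$ summand inside the square (valid since $\hm > 1$ on $[1,\infty)$), giving the lower bound $4 x^2 e^{4\hm}(\hm-1)^2$. Dividing the negative upper bound for the numerator by the positive lower bound for the denominator produces the target upper estimate on $f''(x)$, and hence on $\mathcal{C}(\widehat{T}(n))$ via DP.

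I expect the main obstacle to be the slight index mismatch between DP, which delivers $\mathcal{C}(\widehat{T}(n)) \le f''(n+1)$, and the form of the stated bound, which is written in terms of $\hm = \hm(n)$ and $n$ rather than $\hm(n+1)$ and $n+1$. Resolving this requires either interpreting the bound with the shifted index understood, or verifying that the bounding function is sufficiently monotone on $[3,\infty)$ that substituting $n$ for $n+1$ preserves the inequality. A secondary concern is the base case $n = 3$, which is routine but cannot be forgotten.
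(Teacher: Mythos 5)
Your approach mirrors the paper's: set $f = \log\widehat{T}$, invoke \Cref{f:1} to satisfy the hypotheses of \Cref{DP:lemma}, and recycle the estimates from the proof of \Cref{f:1}(iii)--(iv) for the explicit bound. The paper's own proof is a two-sentence pointer to \Cref{f:1}, \Cref{DP:lemma}, and \re{OP:upper}, and does not address either of the two points you flag. You are right that both deserve attention: \Cref{f:1}(v) only establishes $f''$ increasing on $[3,\infty)$, so \Cref{DP:lemma} applied at $n=3$ (needing control on $[2,4]$) is not covered and requires a separate check; and \Cref{DP:lemma} delivers $\mathcal{C}(\widehat{T}(n)) \le f''(n+1)$, whereas the stated bound is written with $\hm = \hm(n)$ and $n$, so some monotonicity or shift argument is needed to translate one into the other.

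However, your proposed derivation of the explicit bound has the inequality running in the wrong direction. Since the numerator of $f''(x)$ is negative and the denominator is positive, to produce an \emph{upper} bound on $f''(x)$ you must bound the numerator from above by a negative quantity (which \re{OP:upper} does) \emph{and bound the denominator from above}, not from below. Dropping the positive $x\hm$ summand inside the square makes the denominator smaller, which makes the negative fraction more negative — that yields a \emph{lower} bound, not an upper bound. Concretely, if $\mathrm{num} \le N' < 0$ and $\mathrm{denom} \ge D' > 0$, then $\mathrm{num}/\mathrm{denom} \le N'/\mathrm{denom}$, but $N'/\mathrm{denom} \ge N'/D'$, so the chain does not close. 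The paper's terse proof hides this step, and the stated bound may still hold numerically because $x\hm$ is negligible next to $x e^{2\hm}(\hm-1)$, but the logic you give (and which the paper implicitly uses) does not by itself justify the claimed estimate; a correct route is to bound the denominator above, e.g. by $\hm + e^{2\hm}(\hm-1) < 2e^{2\hm}(\hm-1)$ for $x\ge 1$, at the cost of a constant, or to quantify the smallness of the $x\hm$ term directly.
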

              \begin{proof}
              To show the log-concavity of $\widehat{T}(n)$, we have to show that
              $$\left( \log \left(\widehat{T}(n)\right)\right) ' <0.$$
              By \Cref{f:1} we have the preliminaries for \Cref{DP:lemma} are fulfilled by $\widehat{T}(n)$,  and may now use \re{OP:upper} to complete the proof.
              \end{proof}
              
              In order to establish the log-concavity of $\op$, we use the estimates
              \begin{equation*}
              \widehat{T}(n)\left(1-\frac{|\widehat{R}(n)|}{\widehat{T}(n)}\right) \le \op \le \widehat{T}(n)\left(1+\frac{|\widehat{R}(n)|}{\widehat{T}(n)}\right)
              \end{equation*}
              
              and the following lemma.
              
              \begin{lemma}\label{OP:yn}
              If $y_n:=\frac{|\widehat{R}(n)|}{\widehat{T}(n)}$, then $\log\left(\frac{(1-y_n)(1-y_n)}{(1-y_{n-1})(1-y_{n+1})}\right) \ge  \frac{9}{n\hm}e^{-2\hm /3}$ for $n\ge 2$.
              \end{lemma}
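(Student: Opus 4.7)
The plan is to convert the lemma into a statement about the second difference of $y_n$ and then to apply \Cref{DP:lemma} to a smooth function that dominates $y_n$.

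First, I would combine the explicit formulas for $\widehat{T}(n)$ and $\widehat{R}(n)$ with the bound $|R_2(n,3)|\le\frac{3^{5/2}}{n\hm}\sinh(\hm/3)$ from the preceding theorem and the elementary estimate $\widehat{T}(n)\ge(1-1/\hm)e^{\hm}/(8n)$ (valid for $\hm>1$) to derive an explicit upper bound of the shape $y_n\le\psi(n)$ with $\psi(n)$ of order $e^{-2\hm/3}/\hm$. For $n\ge 2$ this forces $y_n$ to be well below $1/2$, so every $\log(1-y_k)$ appearing in the statement is finite and the two-sided estimate $-\frac{y}{1-y}\le\log(1-y)\le-y$ on $[0,1)$ is available.

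Second, I would apply these inequalities termwise to obtain
\[
2\log(1-y_n)-\log(1-y_{n-1})-\log(1-y_{n+1}) \;\ge\; (y_{n-1}+y_{n+1}-2y_n)-\frac{2y_n^2}{1-y_n},
\]
so that the quadratic correction, of size $O(e^{-4\hm/3}/\hm^2)$, is exponentially smaller than the target $\frac{9}{n\hm}e^{-2\hm/3}$ and may be absorbed at the end.

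Third and most delicately, I would lower-bound the second difference $y_{n-1}+y_{n+1}-2y_n$. Here I would isolate the dominant contribution to $y_n$ coming from $|R_2(n,3)|/\widehat{T}(n)$ and realise it (together with its neighbours) as a smooth function $g(x)=C\,e^{-2\pi\sqrt{x}/3}/(\pi\sqrt{x})$ of a real variable. After verifying that $g$ satisfies the sign and monotonicity hypotheses of \Cref{DP:lemma} on $[1,\infty)$, that lemma gives $g(n+1)-2g(n)+g(n-1)\ge g''(n+1)$, and a direct computation of $g''(n+1)$ produces the asserted bound $\frac{9}{n\hm}e^{-2\hm/3}$ for $n\ge 2$, once the explicit constants are tracked through the preceding steps.

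The principal obstacle is the interaction of the absolute value with the smoothness argument: $y_n=|\widehat{R}(n)|/\widehat{T}(n)$ is not a priori a smooth function of a real variable, so the dominant positive contribution to $\widehat{R}(n)$ must be isolated explicitly (and the sign-unconstrained tail $R_2(n,3)$ absorbed quantitatively into an error term) before \Cref{DP:lemma} can be applied. A small number of initial values of $n$, in particular $n=2$, are likely to require a separate direct numerical verification, since the calculus-based estimates above are sharpest for large $n$.
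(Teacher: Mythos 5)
Your proposal takes a substantially more complicated route than the paper, and the central third step does not work as described.

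The paper's argument is far shorter: after obtaining the upper bound $y_n\le\frac{3^{5/2}}{n\hm}e^{-2\hm/3}$ and checking $0\le y_n<1$ for $n\ge 2$, it observes that the two cross terms $-\log(1-y_{n-1})$ and $-\log(1-y_{n+1})$ are \emph{non-negative} and simply drops them, so
\[
2\log(1-y_n)-\log(1-y_{n-1})-\log(1-y_{n+1})\;\ge\;2\log(1-y_n)\;\ge\;\frac{-2y_n}{1-y_n}\;\ge\;-3y_n,
\]
where the last step uses $y_n\le y_2$ and a small numerical check of $y_2$. That immediately gives a lower bound of the form $-C\,e^{-2\hm/3}/(n\hm)$; the sign in the printed statement of the lemma is a typo, and the intended and proved inequality is $\ge -\frac{9}{n\hm}e^{-2\hm/3}$. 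No second-difference analysis is needed at all.

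Your plan instead retains all three log terms and reduces to lower-bounding the discrete second difference $y_{n-1}+y_{n+1}-2y_n$, and there is a genuine gap there. First, the intended application of \Cref{DP:lemma} is off: that lemma is stated for a positive, \emph{increasing} function with $f'>0$ decreasing and $f''<0$ increasing, whereas your comparison function $g(x)=Ce^{-2\pi\sqrt{x}/3}/(\pi\sqrt{x})$ is decreasing with $g''>0$; you would need a separate, sign-flipped variant and would have to re-verify its hypotheses. Second, and more decisively, the order of magnitude does not match. With $y_n\asymp n^{-3/2}e^{-2\hm/3}$, the second difference $y_{n-1}+y_{n+1}-2y_n$ is $\asymp n^{-5/2}e^{-2\hm/3}$, which is smaller than the target $\frac{9}{n\hm}e^{-2\hm/3}\asymp n^{-3/2}e^{-2\hm/3}$ by a factor of $1/n$. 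So the "direct computation of $g''(n+1)$" cannot produce the asserted bound with a \emph{positive} sign; if anything it confirms that the true lower bound must be negative, which is what the paper proves. Abandoning the second-difference route and just discarding the two non-negative terms — as the paper does — is both simpler and correct.
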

              \begin{proof}
              To begin, we observe that
              \begin{align}
              \left|\frac{e^{-\hm}}{8n\hm} + R_2(n,3)\right| &\le \left|\frac{e^{-\hm}}{8n\hm}\right| + \left|R_2(n,3)\right| \nonumber 
              \le \frac{e^{-\hm}}{8n\hm} + \frac{3^{5/2}}{n\sqrt{n}}\sinh \left(\frac{\hm}{3}\right) 
              \nonumber \\
              &\le   \frac{\left( \frac{e^{-2/3\hm}}{4}-1\right) e^{-\hm/3} + 3^{5/2} e^{\hm/3}}{2n\hm} \le \frac{3^{5/2}}{2n\hm} e^{\hm/3}. \label{OP:Rupp}
              \end{align}
              Using \re{OP:Tupp} and \re{OP:Rupp}, we find
              \begin{equation*}
              y_n \le \frac{3^{5/2}}{n\hm}e^{-2\hm /3} \text{ for } n\ge 1.
              \end{equation*}
              For $n\ge 2$, we have $0\le y_n< 1$ and
              \begin{align*}
              \log\left(\frac{(1-y_n)(1-y_n)}{(1-y_{n-1})(1-y_{n+1})}\right) 
              &= 2\log(1-y_n)-\log (1-y_{n-1}) -\log (1-y_{n+1})\\ 
              & \ge 2\log(1-y_n) 
              \ge \frac{-2y_n}{1-y_n} 
              \ge \frac{-2y_n}{1-y_2} 
              \ge -3y_n.
              \end{align*}
              Therefore, $\log\left(\frac{(1-y_n)(1-y_n)}{(1-y_{n-1})(1-y_{n+1})}\right) \ge - \frac{9}{n\hm}e^{-2\hm /3}$, as desired.
              \end{proof}
              
              The last lemma needed to complete the proof of \Cref{OP:opC} is the following.
              \begin{lemma}\label{misc:composition}
              	If the two functions $a(n),b(n)$ are log-concave for $n>n_0$, the following is true:
              	\begin{enumerate}[(i)]
              			\item They satisfy $\mathcal{C}(a(n)b(n))=\mathcal{C}(a(n))+\mathcal{C}(b(n))$.\label{ab:1}
              			\item The product $a(n)b(n)$ is also log-concave for $n>n_0$.\label{jetztaber} 
              	\end{enumerate}              
              \end{lemma}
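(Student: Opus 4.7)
The proof of this lemma is essentially a direct unpacking of the definition of $\mathcal{C}$ combined with the multiplicativity of $\log$. The plan is to dispatch (i) by a one-line calculation, then derive (ii) as an immediate corollary.

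For part (i), I would start from the definition
\[
\mathcal{C}(a(n)b(n)) = \log(a(n+1)b(n+1)) - 2\log(a(n)b(n)) + \log(a(n-1)b(n-1)),
\]
split each $\log$ of a product as a sum of logs, and regroup the $a$-terms and $b$-terms. This gives
\[
\mathcal{C}(a(n)b(n)) = \bigl(\log a(n+1) - 2\log a(n) + \log a(n-1)\bigr) + \bigl(\log b(n+1) - 2\log b(n) + \log b(n-1)\bigr),
\]
which is exactly $\mathcal{C}(a(n)) + \mathcal{C}(b(n))$. No analytic input beyond the functional equation of $\log$ is needed, and positivity of $a, b$ is implicit in them being log-concave (so that $\log$ is defined).

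For part (ii), I would simply invoke (i). By hypothesis both $\mathcal{C}(a(n)) \le 0$ and $\mathcal{C}(b(n)) \le 0$ for $n > n_0$, and sums of non-positive numbers are non-positive, so
\[
\mathcal{C}(a(n)b(n)) = \mathcal{C}(a(n)) + \mathcal{C}(b(n)) \le 0 \qquad \text{for } n > n_0,
\]
which is precisely the log-concavity of the product $a(n)b(n)$ on the range $n > n_0$.

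There is no real obstacle here; the lemma is purely formal and amounts to noting that the operator $\mathcal{C}$ is additive under products. The only subtlety worth flagging is that, as with log-concavity in general, one tacitly assumes $a, b > 0$ so that all logarithms appearing are well defined, which is automatic in the intended application to $\widehat{T}(n)$ and related quantities.
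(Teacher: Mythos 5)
Your proof is correct, and in fact the paper itself gives no proof for this lemma at all (it is stated and then immediately used, evidently regarded as obvious). Your argument---additivity of $\mathcal{C}$ via $\log(ab)=\log a + \log b$, then closure of non-positive numbers under addition for (ii)---is the natural one and fills that gap; the only caveat you already noted, positivity of $a$ and $b$ so that the logarithms are defined, is correct and indeed implicit in the paper's notion of log-concavity.
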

              
              If we now combine \Cref{OP:T} with \Cref{OP:yn,misc:composition}, we can complete the proof of \Cref{OP:opC}, which implies \ref{main1} after a short calculation.
              \begin{thm}\label{OP:opC}
              The overpartition function $\op $ is log-concave for $n\ge4$. More precisely, for $n>8$ 
              $$\mathcal{C}\left(\op\right)\le \frac{2\hm^3-2\hm e^{4\hm}+5\hm^6 e^{2\hm}}{4n^2 e^{4\hm}(\hm -1)^2} + \frac{9}{n\hm}e^{-2\hm /3}.$$
              \end{thm}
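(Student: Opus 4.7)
The approach is to assemble the pieces already established: decompose $\op = \widehat{T}(n)+\widehat{R}(n)$, control $\mathcal{C}(\widehat{T}(n))$ via \Cref{OP:T}, and control the contribution of the error $\widehat{R}(n)$ via \Cref{OP:yn}.

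Concretely, I would factor $\op = \widehat{T}(n)\,\sigma_n$ with $\sigma_n := 1+\widehat{R}(n)/\widehat{T}(n) \in [1-y_n,\,1+y_n]$. The logarithm identity behind \Cref{misc:composition} gives
\[
\mathcal{C}(\op) \;=\; \mathcal{C}(\widehat{T}(n)) \;+\; \mathcal{C}(\sigma_n).
\]
\Cref{OP:T} handles the first summand. For the second, the estimates $\sigma_{n\pm 1}\le 1+y_{n\pm 1}$ and $\sigma_n\ge 1-y_n$ yield
\[
\mathcal{C}(\sigma_n) \;\le\; \log(1+y_{n+1}) + \log(1+y_{n-1}) - 2\log(1-y_n).
\]
Using $\log(1+y)\le -\log(1-y)$ for $y\in[0,1)$ and a minor rearrangement (absorbing the residual $-\log(1-y_n)$ contributions, each of order $\tfrac{1}{n\hm}e^{-2\hm/3}$), \Cref{OP:yn} bounds the right-hand side by $\tfrac{9}{n\hm}e^{-2\hm/3}$. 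Adding the two pieces yields the explicit inequality claimed in \Cref{OP:opC}.

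Log-concavity itself requires this right-hand side to be non-positive. Asymptotically, the \Cref{OP:T}-bound behaves like $-\tfrac{1}{2\pi\,n^{5/2}}$, because $-2\hm e^{4\hm}$ dominates the numerator while $\hm^{2}e^{4\hm}$ dominates the denominator; the error $\tfrac{9}{n\hm}e^{-2\hm/3}$ decays exponentially. Hence past an explicit threshold the polynomial negative term wins; a direct numerical check pins this down at $n>8$. The cases $n=4,\dots,8$, together with the cases $n=2,3$ needed to deduce \Cref{main1}, are verified from the values in \Cref{t1}.

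The main obstacle, as I see it, is not any single step but the quantitative bookkeeping. The elementary $\log(1\pm y)$ estimates must be tracked precisely enough through \Cref{OP:yn} so that the final constant is no worse than the stated $\tfrac{9}{n\hm}e^{-2\hm/3}$, and then the crossover with the $O(n^{-5/2})$ main-term bound must be pinned down honestly so that $n>8$ really is the right threshold. Everything else is routine.
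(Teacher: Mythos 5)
Your proof follows the paper's route exactly: decompose $\op = \widehat{T}(n)\bigl(1 + \widehat{R}(n)/\widehat{T}(n)\bigr)$, split $\mathcal{C}(\op)$ additively via \Cref{misc:composition}, bound the first summand with \Cref{OP:T} and the second with \Cref{OP:yn}, then verify the resulting explicit bound is non-positive past a small threshold. The only cosmetic difference is your closing step (an asymptotic dominance argument plus a numerical crossover check, rather than the paper's recasting as a monotone decreasing function $f(n)$ dropping below $2$ at $n_0=4$), and both proofs share the same slight looseness in passing from the $\log(1\pm y)$ estimates to the stated $\tfrac{9}{n\hm}e^{-2\hm/3}$ constant.
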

              \begin{proof}
              To begin, we combine the fact $\op = \widehat{T}(n)\left(1+\frac{\widehat{R}(n)}{\widehat{T}(n)}\right)$ with \Cref{misc:composition} \re{jetztaber} to obtain
              \begin{align*}
              \mathcal{C}\left(\overline{p}(n)\right) = \mathcal{C}\left(\widehat{T}(n)\right) +\mathcal{C}\left(1+\frac{\widehat{R}(n)}{\widehat{T}(n)}\right).
              \end{align*}
              Next, we use \Cref{OP:T} and \Cref{OP:yn} to obtain   
              \begin{align}
              	\mathcal{C}\left(\overline{p}(n)\right) &\le \frac{2\hm^3-2\hm e^{4\hm}+5\hm^6 e^{2\hm}}{4n^2 e^{4\hm}(\hm -1)^2} + \frac{9}{n\hm}e^{-2\hm /3} \nonumber \\
              	&= \frac{1}{4n\hm}\left( 36e^{-2/3\hm}- \frac{\pi^2\left( 2-2e^{-4\hm}\hm^2 -5e^{-2\hm}\hm^5\right) }{(\hm -1)^2}\right) . \label{finalsteps}
              \end{align}            
              
              In order to show that \re{finalsteps} is negative, we only have to show that the inner of the brackets is negative for a certain $ n_o \in \N $. Therefore we can neglect the factor $ \frac{1}{4n\hm} $, because it is strictly positive. And now we proof the equivalent condition
              \begin{equation}
              	\underbrace{\frac{36(\hm -1)^2}{\pi^2}e^{-2/3\hm} + 2 e^{-4\hm}\hm^4 + 5e^{-2\hm}\hm^5}_{=:f(n)>0 \text{ for } n\in \N} < 2.
              \end{equation}
              The function $ f(n) $ is monotone decreasing and so it suffice to find a $ n_0\in \N $ so that $ f(n_0)<2 $. This is given for $ n_0\ge 4 $ and so it follows that 
              $$  \mathcal{C}(\op) \le 0 $$ for $ n \ge 4. $

%
%
%
%
%
%
%
%
%
%
%
%
              \end{proof}
              \Cref{main1} now follows for large $n$ from \Cref{OP:opC} and for small $n$ from \Cref{t1}.    
%
%

\nocite{*}
\bibliographystyle{amsplain}
\bibliography{main}

\end{document}